\def\@table{table}
\long\def\@makecaption#1#2{
\ifx\@captype\@table
 \begin{center}{\footnotesize #1}\\{\footnotesize\sc #2}\end{center}
 \vskip\abovecaptionskip\relax
\else
 \vskip\abovecaptionskip\relax
 \sbox\@tempboxa{{\footnotesize {#1.}~~ #2}}
 \hbox to\hsize{\footnotesize\box\@tempboxa\hfil}%
 \vskip\belowcaptionskip
\fi
}
\theoremstyle{plain}
\newtheorem{theorem}{Theorem}
\newtheorem{lemma}[theorem]{Lemma}
\theoremstyle{definition}
\newtheorem{remark}[theorem]{Remark}
\numberwithin{theorem}{section}
\newcommand{\?}{\;\!}
\title[]{A Chernoff-type Lower Bound for the Gaussian $\boldsymbol{Q}$-function}
\author[]{Fran\c{c}ois D. C\^ot\'e, Ioannis N. Psaromiligkos, and Warren J. Gross} 
\thanks{March 2012.}
\thanks{This research was supported in part by the Natural Sciences and Engineering Research Council of Canada (PGSD3-392045-2010) and by the Fonds qu\'eb\'ecois de la recherche sur la nature et les technologies (PR-127199).}
\thanks{The authors are with the Department of Electrical and Computer Engineering, McGill University, Montreal, QC H3A~2A7, Canada. (email: francois.cote@mail.mcgill.ca, yannis@ece.mcgill.ca, warren.gross@mcgill.ca).}
\begin{document}

\maketitle

\thispagestyle{empty}

\begin{abstract}

A lower bound for the Gaussian $Q$-function is presented in the form of a single exponential function with parametric order and weight.  We prove the lower bound by introducing two functions, one related to the $Q$-function and the other similarly related to the exponential function, and by obtaining inequalities that indicate the sign of the difference of the two functions. 
 
\medskip

\flushleft{\sc Keywords.} Chernoff bound, $Q$-function, error function, Mills' ratio. 

\end{abstract}

\bigskip

\section{Introduction}

The Gaussian $Q$-function represents the probability that a standard normal random variable exceeds a value $x$ and is defined by
\[ Q(x) = \frac{1}{\sqrt{2\pi}}\int\limits_{x}^{\infty}e^{-\frac{t^2}{2}} \, dt \text{,} \quad x \in \mathbb{R} \text{.} \]
Because of the prevalence of normal random variables, the $Q$-function is arguably one of the most important integrals encountered in applied mathematics, statistics, and engineering. Unfortunately, the function is difficult to handle mathematically as it is an nonelementary integral, that is, it cannot be expressed as a finite composition of simple functions, such as algebraic, exponential, logarithmic, and rational functions.  For this reason, the development of approximations and bounds for the $Q$-function is a topic of continued research dating back to Laplace. 

In this note, we present a tight lower bound for the \makebox{$Q$-function} in the form of a single exponential function or, more precisely, in the form of a Gaussian function
\[ \alpha \? e^{-\beta x^2} \text{,} \]
where the weight $\alpha$ and the order $\beta$ are nonnegative constants and the argument $x$ is the same as in the $Q$-function.  Bounds in this form are usually referred to as Chernoff-type after the Chernoff bound~\cite{Chernoff1952}.  We choose to focus our attention on expressions of  this form because they easily accommodate analytical manipulations.  This is especially true, for example, in the analysis of communications systems, where these expressions are used to simplify various operations, like exponentiation and expectation, that must be applied to the $Q$-function to calculate the error rate of signals transmitted through fading channels~\cite{Alouini2004}.

It is known~\cite{Chang2011} that the tightest possible Chernoff-type upper bound for the $Q$-function is obtained for $\alpha = \beta = 1/2$, but a similar limit has not been established for the lower bound.  To the best of our knowledge, our bound is the tightest known Chernoff-type lower bound.

Chernoff-type lower bounds for the $Q$-function have received rather scant attention until recently.  Other than our bound, the only provable lower bounds are due to Chang et al.~\cite{Chang2011}, Fu and Kam~\cite{Fu2011}, and Chiani et al.~\cite{Chiani2003}.  Other lower bounds containing Gaussian functions are available (see \cite{Chiani2003, Tate1953, Wozencraft1965, Gordon1941, Birnbaum1942, Boyd1959, Abreu2009a, Abreu2009} for example), but these bounds are more complicated since they also involve other functions or a sum of multiple terms, and thus they are not of the Chernoff-type. 

The remainder of this note is devoted to establishing our bound.

\section{Main Result}  

\begin{theorem}\label{Thm:main}
For all real $x$ and any $\kappa \geq 1$, the following lower bound holds for the Gaussian $Q$-function:
\[ Q(x) \geq  \left(\frac{e^{(\pi(\kappa-1)+2)^{-1}}}{2\kappa} \sqrt{\frac{1}{\pi}(\kappa-1)(\pi(\kappa-1)+2)} \, \right) \! e^{-\frac{\kappa x^2}{2}} \text{.} \]
\end{theorem}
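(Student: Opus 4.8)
Since the right-hand side vanishes at $\kappa=1$, where the claim reduces to $Q(x)\ge 0$, I would assume $\kappa>1$ throughout and write $\phi(t)=\frac{1}{\sqrt{2\pi}}e^{-t^2/2}$ and $R(t)=Q(t)/\phi(t)$ (Mills' ratio). The first step is to reduce the inequality, valid for \emph{all} real $x$, to a single scalar inequality. Setting $g(x)=e^{\kappa x^2/2}Q(x)$ one has $g'(x)=e^{\kappa x^2/2}\phi(x)\bigl(\kappa xR(x)-1\bigr)$, and since $tR(t)$ increases strictly from $0$ to $1$ on $(0,\infty)$ — a standard fact, for instance a consequence of $\frac{t}{1+t^2}<R(t)<\frac{1}{t}$ — the factor $\kappa xR(x)-1$ is negative on $(-\infty,x_0)$ and positive on $(x_0,\infty)$, where $x_0>0$ is the unique root of $\kappa x_0 Q(x_0)=\phi(x_0)$. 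Hence $g$ has global minimum $g(x_0)$, and the theorem is equivalent to the single inequality $g(x_0)\ge\alpha(\kappa)$, where $\alpha(\kappa)$ denotes the bracketed constant.

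Using $\kappa x_0 Q(x_0)=\phi(x_0)$ gives $g(x_0)=\frac{e^{(\kappa-1)x_0^2/2}}{\kappa x_0\sqrt{2\pi}}$, and substituting $\tilde x=\sqrt{\frac{2}{(\kappa-1)(\pi(\kappa-1)+2)}}$ into the very same expression reproduces $\alpha(\kappa)=\frac{e^{(\kappa-1)\tilde x^2/2}}{\kappa\tilde x\sqrt{2\pi}}$. The map $t\mapsto e^{(\kappa-1)t^2/2}/t$ is decreasing on $\bigl(0,(\kappa-1)^{-1/2}\bigr)$, and $\tilde x<(\kappa-1)^{-1/2}$ because $\pi(\kappa-1)+2>2$; so once we know $x_0\le\tilde x$, both arguments lie in the decreasing range and $g(x_0)\ge\alpha(\kappa)$ follows at once. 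By monotonicity of $\kappa tR(t)$, the inequality $x_0\le\tilde x$ is equivalent to $\kappa\tilde x R(\tilde x)\ge 1$, that is $R(\tilde x)\ge\frac{1}{\kappa\tilde x}$, and a short computation (solving the defining quadratic for $\kappa$) shows $\frac{1}{\kappa\tilde x}=\frac{\pi}{(\pi-1)\tilde x+\sqrt{\tilde x^2+2\pi}}$. As $\tilde x$ ranges over all of $(0,\infty)$ while $\kappa$ ranges over $(1,\infty)$, everything reduces to the lower bound on Mills' ratio
\[ R(t)\ \ge\ \frac{\pi}{(\pi-1)t+\sqrt{t^2+2\pi}}\,, \qquad t\ge 0. \qquad(\star) \]

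To prove $(\star)$ I would introduce the pair of functions mentioned in the abstract: $F(t)=\sqrt{2\pi}\,Q(t)=\int_t^{\infty}e^{-s^2/2}\,ds$, attached to the $Q$-function, and $G(t)=\frac{\pi e^{-t^2/2}}{D(t)}$ with $D(t)=(\pi-1)t+\sqrt{t^2+2\pi}$, attached to the exponential; then $(\star)$ is exactly $F\ge G$ on $[0,\infty)$. One checks the matching boundary data $F(0)=G(0)=\sqrt{\pi/2}$ and $F(t),G(t)\to0$ as $t\to\infty$, and that $(F-G)'(t)=e^{-t^2/2}\,\pi f(t)/D(t)^2$ with $f(t)=(\pi-3)+\frac{t}{\sqrt{t^2+2\pi}}-\frac{2(\pi-2)t}{t+\sqrt{t^2+2\pi}}$. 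The decisive point is that $f$ has exactly one sign change on $(0,\infty)$: after rationalizing, the equation $f(t)=0$ collapses to $t=(\pi-3)\sqrt{t^2+2\pi}$, which (using $3<\pi<4$) has the single positive solution $t_\ast=(\pi-3)\sqrt{\frac{2\pi}{(4-\pi)(\pi-2)}}$; since $f(0)=\pi-3>0$ while $f(t)<0$ for large $t$, it follows that $f>0$ on $(0,t_\ast)$ and $f<0$ on $(t_\ast,\infty)$. Hence $F-G$ climbs from $0$ to its single interior maximum at $t_\ast$ and then falls back toward $0$, so $F-G\ge 0$ on $[0,\infty)$; this gives $(\star)$ and with it the theorem. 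The main obstacle is precisely this last step — guessing the right companion $G$ for $F$ and checking that the a priori bulky numerator $\pi(tD+D')-D^2$ of $(F-G)'$ rationalizes to a function with a unique zero, the collapse of $f(t)=0$ to $t=(\pi-3)\sqrt{t^2+2\pi}$ being the pivotal computation; the remaining ingredients (monotonicity of $tR(t)$, the boundary values, placing $\tilde x$ below $(\kappa-1)^{-1/2}$) are routine.
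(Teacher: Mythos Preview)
Your argument is correct, and it hits the same key point as the paper --- Boyd's Mills-ratio bound
\[
R(t)\ \ge\ \frac{\pi}{(\pi-1)t+\sqrt{t^{2}+2\pi}}
\]
evaluated at the special abscissa $\tilde x=\sqrt{2/((\kappa-1)(\pi(\kappa-1)+2))}$ --- but the \emph{reduction} is genuinely different. The paper multiplies both sides by $\sqrt{2\pi}\,e^{x^{2}/2}$, sets $f=r-R$, and runs a three-case sign analysis driven by the first-order relation $\partial_{x}f=xf+1-\kappa x\,r$: Lemma~2.2 locates the interval $[x_1,x_2]$ on which $\kappa x\,r\ge1$ (with your $\tilde x$ appearing as $x_1$), and Lemma~2.3 invokes Boyd to get $\kappa xR\ge1$ on $[x_1,\infty)$; the cases then combine. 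You instead minimise $g(x)=e^{\kappa x^{2}/2}Q(x)$ directly, locate its unique minimiser $x_0$ via the monotonicity of $tR(t)$, recognise that the bracketed constant equals $\dfrac{e^{(\kappa-1)\tilde x^{2}/2}}{\kappa\tilde x\sqrt{2\pi}}$ for the very same $\tilde x=x_1$, and finish with the single comparison $x_0\le\tilde x$. Your route is shorter and more conceptual (no case split, no auxiliary $x_2$); the paper's route in exchange makes the role of the differential identity $R'=xR-1$ explicit and avoids needing the global monotonicity of $tR(t)$. A further difference is that the paper simply cites Boyd's inequality, whereas you supply a self-contained proof; your single-sign-change analysis of $f(t)$ and the collapse $f(t)=0\Leftrightarrow t=(\pi-3)\sqrt{t^{2}+2\pi}$ check out (the limit $f(t)\to0^{-}$ as $t\to\infty$ is enough to pin the sign on $(t_\ast,\infty)$).
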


\subsection{Proof of Theorem~\ref{Thm:main}}Let 
\begin{equation}
g(x,\kappa) = \left( \frac{e^{(\pi(\kappa-1)+2)^{-1}}}{2\kappa}\sqrt{\frac{1}{\pi}(\kappa-1)(\pi(\kappa-1)+2)} \, \right) \! e^{-\frac{\kappa x^2}{2}} \text{.}  
\end{equation}
We begin by observing that $g(x,\kappa)$ is an even function of $x$ and that $Q(x)$ is a decreasing function of $x$.  Therefore, if the bound in Theorem~\ref{Thm:main} holds for $x \geq 0$, then it holds for all $x$.  So we restrict our attention to $x \geq 0$.  We also note that the bound holds trivially for $\kappa = 1$ because $g(x,1) = 0 < Q(x)$ for all $x \geq 0$.  Therefore, it remains to show that the bound holds for $x \geq 0$ and $\kappa > 1$. 

Let us define the functions
\begin{equation}
r(x,\kappa) = \sqrt{2\pi} \? g(x,\kappa) \? e^{\frac{x^2}{2}} \text{,} \quad x \geq 0 \text{,} \quad \kappa > 1 
\end{equation}
and
\begin{equation}
R(x) = \sqrt{2\pi} \? Q(x) \? e^{\frac{x^2}{2}} \text{,} \quad x \geq 0 \text{.}
\end{equation}
Consider the difference $r(x,\kappa)-R(x)$ denoted by $f(x,\kappa)$.  To prove Theorem~\ref{Thm:main}, we show that $f(x,\kappa) \leq 0$ for $x \geq 0$ and $\kappa > 1$.  The following lemmas provide relations involving $r(x,\kappa)$ and $R(x)$ to help determine the sign of $f(x,\kappa)$.

\begin{lemma}\label{Lem:f}
Suppose $x \geq 0$ and let $\kappa > 1$.  There exist unique points $x_1$ and $x_2$ that satisfy ${0 < x_1 < 1/\sqrt{\kappa-1} < x_2}$, such that the relation
\[ \kappa x \? r(x,\kappa) \geq 1 \]
holds if and only if $x \in [x_1,x_2]$.  Furthermore, the relation holds with equality if and only if $x = x_1$ or $x = x_2$.
\end{lemma}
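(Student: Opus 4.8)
The plan is to exploit the fact that, once the Gaussian factors are cancelled, $\kappa x\,r(x,\kappa)$ is a positive constant multiple of the elementary unimodal function $x\mapsto x\,e^{-(\kappa-1)x^2/2}$, so the set where $\kappa x\,r(x,\kappa)\geq 1$ will automatically be a single interval as soon as one checks that the peak value of this function strictly exceeds $1$.

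First I would substitute the definition of $g$ into $r(x,\kappa)$ and simplify. Writing $c=\kappa-1>0$ one gets
\[ \kappa x\,r(x,\kappa) = B(\kappa)\,x\,e^{-\frac{(\kappa-1)x^2}{2}}, \qquad B(\kappa)=\frac{1}{2}\,e^{(\pi(\kappa-1)+2)^{-1}}\sqrt{2(\kappa-1)(\pi(\kappa-1)+2)}\, . \]
Then I would study $h(x)=x\,e^{-(\kappa-1)x^2/2}$ on $[0,\infty)$: it vanishes at $x=0$, tends to $0$ as $x\to\infty$, and its derivative $h'(x)=e^{-(\kappa-1)x^2/2}\bigl(1-(\kappa-1)x^2\bigr)$ is strictly positive on $[0,1/\sqrt{\kappa-1})$ and strictly negative on $(1/\sqrt{\kappa-1},\infty)$. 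Hence $h$ increases strictly, then decreases strictly, with a unique maximum at $x=1/\sqrt{\kappa-1}$, where $h=e^{-1/2}/\sqrt{\kappa-1}$.

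The heart of the argument is the claim that the peak value $B(\kappa)\,h(1/\sqrt{\kappa-1})$ is strictly greater than $1$. Setting $u=\pi(\kappa-1)+2>2$, a short computation gives
\[ B(\kappa)\,h\bigl(1/\sqrt{\kappa-1}\bigr) = \sqrt{\tfrac{u}{2}}\;e^{\frac{1}{u}-\frac{1}{2}} =: M(u). \]
I would observe that $M(2)=1$ and that $\tfrac{d}{du}\log M(u)=\tfrac{1}{2u}-\tfrac{1}{u^2}=\tfrac{u-2}{2u^2}>0$ for $u>2$, so $M$ is strictly increasing on $(2,\infty)$ and therefore $M(u)>1$ for every $u>2$, that is, for every $\kappa>1$. (This is precisely the computation that motivates the particular constant appearing in Theorem~\ref{Thm:main}.)

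Finally I would assemble these facts. Since $B(\kappa)h$ is continuous, equals $0<1$ at $x=0$, rises strictly to a maximum value $M(u)>1$ at $x=1/\sqrt{\kappa-1}$, and then decreases strictly back toward $0$, the equation $B(\kappa)\,h(x)=1$ has exactly one solution $x_1$ in $(0,1/\sqrt{\kappa-1})$ and exactly one solution $x_2$ in $(1/\sqrt{\kappa-1},\infty)$; moreover $\kappa x\,r(x,\kappa)=B(\kappa)h(x)\geq 1$ holds precisely on $[x_1,x_2]$, with equality only at the endpoints. I expect the only delicate point to be the peak inequality $M(u)>1$; everything else is routine one-variable calculus on an explicit function.
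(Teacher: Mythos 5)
Your proof is correct: the simplification $\kappa x\,r(x,\kappa)=B(\kappa)\,x\,e^{-(\kappa-1)x^2/2}$ checks out, the unimodality of $x\,e^{-(\kappa-1)x^2/2}$ with its strict maximum at $1/\sqrt{\kappa-1}$ is right, and the peak value $M(u)=\sqrt{u/2}\;e^{1/u-1/2}$ with $u=\pi(\kappa-1)+2$ does satisfy $M(2)=1$ and $(\log M)'(u)=(u-2)/(2u^2)>0$, so $M(u)>1$ for every $\kappa>1$; the interval structure and the equality cases then follow exactly as you assemble them. The paper reaches the same conclusion through a change of variables instead of a peak computation: squaring your inequality (both sides are nonnegative) and setting $w=x^2(1-\kappa)$ turns $\kappa x\,r(x,\kappa)\geq 1$ into $we^{w}\leq z$ with $z=-\tfrac{2}{u}\,e^{-2/u}$, and the paper's key observation is that $z$ is itself of the form $w_0e^{w_0}$ with $w_0=-2/u\in(-1,0)$, so automatically $-e^{-1}<z<0$ because $w=-1$ is the strict minimizer of $we^{w}$; that structural remark is exactly equivalent to your inequality $M(u)>1$ but needs no calculation. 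What your route buys is transparency: the explicit value $M(u)$ shows precisely how the constant in Theorem~\ref{Thm:main} is tuned so that the threshold $1$ is strictly exceeded for all $\kappa>1$, at the modest cost of the auxiliary monotonicity argument for $M$, whereas the paper's substitution gets solvability for free but hides where the constant comes from.
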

\begin{proof}
We first rewrite the given relation into an equivalent form, considering $x \geq 0$ and $\kappa > 1$:
\begin{equation}\label{EQ:CLemf}
x^2(1-\kappa) \? e^{x^2(1-\kappa)} \leq 2(\pi(1-\kappa)-2)^{-1} \? e^{2(\pi(1-\kappa)-2)^{-1}} \text{.}
\end{equation}
Note that since $\kappa > 1$ we have $x^2(1-\kappa) \leq 0$ and $2(\pi(1-\kappa)-2)^{-1} < 0$.  The main idea behind the proof is to identify the necessary and sufficient conditions on $x$ that ensure the validity of (\ref{EQ:CLemf}).

We now define the function $h(w) = we^w$ for $w \leq 0$ and consider the inequality $h(w) \leq z$ for $-e^{-1} < z < 0$. We compute the derivative $\frac{d}{dw} \, h(w) = e^w+we^w$ and observe that $\frac{d}{dw} \, h(w) < 0$ for $w < -1$ and $\frac{d}{dw} \, h(w) > 0$ for $w > -1$.  In other words, $h(w)$ is decreasing for $x < -1$ and increasing for $x > -1$.  Thus $h(w)$ has a unique minimum $h(-1) = -e^{-1}$ at $w = -1$.  Moreover, $h(w)$ tends to $0$ as $w \to -\infty$ and $h(0) = 0$.  This tells us that $h(w) \leq z$ holds for some values of $w < 0$ as long as $-e^{-1} < z < 0$.  In this case, $h(w) \leq z$ holds if and only if the values of $w$ are between two endpoints, where one endpoint, say $w_1$, is greater than $-1$ and the other, say $w_2$, is less than $-1$.  Also, $h(w) = z$ holds if and only if $w$ is equal to either $w_1$ or $w_2$.  

Setting $z = 2(\pi(1-\kappa)-2)^{-1} \? e^{2(\pi(1-\kappa)-2)^{-1}}$, we see that the corresponding $w_1$ and $w_2$ must exist.  The reason is that here $z$ has the same form as $h(w)$, and thus $-e^{-1} < z < 0$. Setting $w = x^2(1-\kappa)$, we see that there is a unique positive value of $x$, say $x_1$, that satisfies $w = w_1$, and similarly there is a unique positive value of $x$, say $x_2$, that satisfies $w = w_2$.  The condition $w \in [w_2,w_1]$ is equivalent to $x \in [x_1,x_2]$, and $w_2 < -1 < w_1$ is equivalent to $x_1 < 1/\sqrt{\kappa-1} < x_2$.  It follows that (\ref{EQ:CLemf}), and thus the relation in the lemma, holds if and only if $x \in [x_1,x_2]$, where $0 < x_1 < 1/\sqrt{\kappa-1} < x_2$, and it holds with equality if and only if  $x = x_1$ or $x = x_2$.   
\end{proof}

\begin{remark}
The exact value of $x_1$ can be obtained from (\ref{EQ:CLemf}) as the solution to $x^2(1-\kappa) = 2(\pi(1-\kappa)-2)^{-1}$ for $x$.  This solution is less than $1/\sqrt{\kappa-1}$ and is given by
\begin{equation}\label{EQ:x_1}
x_1 = \frac{\sqrt{2}}{\sqrt{(\kappa-1)(\pi(\kappa - 1) +2)}} \text{.}
\end{equation}
The exact value of $x_2$ is not needed in the proof of Theorem~\ref{Thm:main}; the fact that it exists is enough for our purposes.
\end{remark}

\begin{lemma}\label{Lem:R}
Suppose $x \geq 0$ and let $\kappa > 1$.  The relation 
\[ \kappa x \? R(x) \geq 1 \]
holds if $x \geq x_1$, where $x_1$ is as in Lemma~\ref{Lem:f}.
\end{lemma}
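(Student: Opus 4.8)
The plan is to reduce the lemma to a sharp classical lower bound for Mills' ratio and then finish with an elementary algebraic comparison. The starting observation is that
\[ R(x) = \sqrt{2\pi}\, Q(x)\, e^{x^2/2} \]
is exactly Mills' ratio for the standard normal distribution, so the relation $\kappa x\, R(x) \geq 1$ asserts that $R(x) \geq 1/(\kappa x)$. I would then invoke the bound
\[ R(x) \geq \frac{\pi}{(\pi-1)x + \sqrt{x^2+2\pi}} \text{,} \qquad x \geq 0 \text{,} \]
a known lower bound for Mills' ratio (see, e.g., \cite{Boyd1959}) that happens to be tight both at $x = 0$ and as $x \to \infty$. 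This inequality carries all of the analytic content; everything after it is routine.

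Granting the bound, it remains to check that its right-hand side multiplied by $\kappa x$ is at least $1$ precisely when $x \geq x_1$. The inequality $\pi\kappa x \geq (\pi-1)x + \sqrt{x^2+2\pi}$ rearranges to $(\pi(\kappa-1)+1)\,x \geq \sqrt{x^2+2\pi}$; since $\pi(\kappa-1)+1 > 0$ and $x \geq 0$, both sides are nonnegative, so squaring yields the equivalent inequality $\bigl((\pi(\kappa-1)+1)^2-1\bigr)x^2 \geq 2\pi$, that is, $\pi(\kappa-1)(\pi(\kappa-1)+2)\,x^2 \geq 2\pi$, that is, $(\kappa-1)(\pi(\kappa-1)+2)\,x^2 \geq 2$. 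By the formula (\ref{EQ:x_1}) for $x_1$, this last inequality is exactly $x \geq x_1$. Combining it with the displayed Mills'-ratio bound gives $\kappa x\, R(x) \geq \pi\kappa x / \bigl((\pi-1)x + \sqrt{x^2+2\pi}\bigr) \geq 1$ for every $x \geq x_1$, which is the claim.

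The only genuine obstacle is the Mills'-ratio bound; should one prefer to prove it rather than cite it, a comparison argument works. Writing $\ell(x)$ for its right-hand side and $D = R - \ell$, the identity $R'(x) = xR(x) - 1$ gives $\bigl(e^{-x^2/2}D\bigr)' = e^{-x^2/2}\bigl(x\ell(x)-1-\ell'(x)\bigr)$, and since $D(0) = 0$ this integrates to $e^{-x^2/2}D(x) = \int_0^x e^{-t^2/2}\bigl(t\ell(t)-1-\ell'(t)\bigr)\,dt$. A direct computation shows that $t\ell(t)-1-\ell'(t)$ is nonnegative on an initial interval $[0,x^\ast]$ and nonpositive on $[x^\ast,\infty)$; hence $D(x) \geq 0$ for $x \leq x^\ast$ because the integrand is nonnegative there, while for $x \geq x^\ast$ one writes instead $e^{-x^2/2}D(x) = -\int_x^\infty e^{-t^2/2}\bigl(t\ell(t)-1-\ell'(t)\bigr)\,dt \geq 0$, using that $e^{-x^2/2}D(x) \to 0$ as $x \to \infty$ and that the integrand is nonpositive on $[x,\infty)$. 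The delicate step is confirming that $t\ell(t)-1-\ell'(t)$ changes sign exactly once, so that the two monotonicity arguments between them cover all of $[0,\infty)$.
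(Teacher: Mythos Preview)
Your proof is correct and follows essentially the same route as the paper: both invoke Boyd's lower bound for Mills' ratio and then reduce the sufficient condition $\pi\kappa x \geq (\pi-1)x + \sqrt{x^2+2\pi}$ algebraically to $x \geq x_1$ via (\ref{EQ:x_1}). Your version simply spells out the squaring step in more detail and appends an optional sketch of how one might prove Boyd's inequality rather than cite it.
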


\begin{proof}
A result of Boyd~\cite{Boyd1959} (see also \cite[p.~179,~(6)]{Mitrinovic1970}) says that, for $x \geq 0$,
\begin{equation}
R(x) \geq \frac{\pi}{(\pi-1)x+\sqrt{x^2+2\pi}} \text{.} 
\end{equation}
Therefore, a sufficient condition for the relation in the lemma to hold is that
\begin{equation}\label{EQ:CLemR}
\frac{\pi \kappa x}{(\pi-1)x+\sqrt{x^2+2\pi}} \geq  1 \text{.} 
\end{equation}
To prove the lemma, we show that (\ref{EQ:CLemR}) holds whenever $x \geq x_1$.  Considering $x \geq 0$ and $\kappa > 1$, we see that (\ref{EQ:CLemR}) translates into
\begin{equation}
x \geq \frac{\sqrt{2}}{\sqrt{(\kappa-1)(\pi(\kappa-1)+2)}} \text{.}
\end{equation}
Looking at (\ref{EQ:x_1}), this condition is equivalent to $x \geq x_1$.   \end{proof}

We are now ready to show that $f(x,\kappa) \leq 0$ for $x \geq 0$ and $\kappa > 1$.  We consider three cases:

\medskip

\begin{flushleft}

\textbf{Case 1:} $x \geq x_2$

By Lemma~\ref{Lem:f}, we know that $r(x,\kappa) \leq 1/(\kappa x)$ when $x \geq x_2$, and by Lemma~\ref{Lem:R}, we know that $R(x) \geq 1/(\kappa x)$ when $x \geq x_2 > x_1$.  Therefore, $f(x,\kappa) \leq 0$ for $x \geq x_2$.
 
\medskip

\textbf{Case 2:} $x \in [0,x_1]$

Let us examine $f(x,\kappa)$ at $x = x_1$.  By Lemma~\ref{Lem:f}, we know that $r(x_1,\kappa) = 1/(\kappa x_1)$, and by Lemma~\ref{Lem:R}, we know that $R(x_1) \geq 1/(\kappa x_1)$.  This means that $f(x_1,\kappa) \leq 0$.  Now we obtain
\begin{equation}\label{EQ:df}
\frac{\partial}{\partial x} \, f(x,\kappa) = x \? f(x,\kappa)+1-\kappa x \? r(x,\kappa) \text{.}
\end{equation}
Note by Lemma~\ref{Lem:f} that $\kappa x \? r(x,\kappa) < 1$ when $x \in [0,x_1)$.  Thus $\frac{\partial}{\partial x} \, f(x,\kappa) >  x \? f(x,\kappa)$.  To show that $f(x,\kappa) \leq 0$ when $x \in [0,x_1)$, we suppose, on the contrary, that $f(x,\kappa) > 0$ for some point in $[0,x_1)$.  Then as soon as $f(x,\kappa) > 0$, we have $\frac{\partial}{\partial x} \, f(x,\kappa) > 0$.  So $f(x,\kappa)$ must be increasing between that point and $x_1$.  This gives a contradiction since we know that $f(x_1,\kappa) \leq 0$.  Therefore, $f(x,\kappa) \leq 0$ for $x \in [0,x_1]$. 

\medskip

\textbf{Case 3:} $x \in [x_1,x_2]$

Recall from the previous case that $f(x_1,\kappa) \leq 0$.  By Lemma~\ref{Lem:f}, we know that $\kappa x \? r(x,\kappa) \geq 1$ when $x \in [x_1,x_2]$.  Thus, referring to~(\ref{EQ:df}), we have $\frac{\partial}{\partial x} \, f(x,\kappa) \leq x \? f(x,\kappa)$.  This means that $f(x,\kappa)$ must remain nonpositive for $x \in [x_1,x_2]$.

\medskip

\end{flushleft}

We have thus established our main result.

\end{document}